\newtheorem{thm}{Theorem}
\newtheorem{cor}[thm]{Corollary}
\theoremstyle{definition}
\newtheorem{rem}[thm]{Remark}
\newtheorem{defn-thm}[thm]{Definition--Theorem}  
\newtheorem{defn-lem}[thm]{Definition--Lemma}  
\theoremstyle{remark}
\renewcommand{\o}[0]{{\mathcal O}} 
\newcommand{\p}[0]{{\mathbb P}}
\newcommand{\sF}{\mathcal{F}}
\newcommand{\sG}{\mathcal{G}}
\def\loccoh#1.#2.#3.#4.{H^{#1}_{#2}(#3,#4)}
\DeclareMathAlphabet{\mathchanc}{OT1}{pzc}%
                                {m}{it}
\begin{document}
\bibliographystyle{amsalpha}


\title[Ulrich bundles on blowups]{Ulrich bundles on blowups}

\author{Yeongrak Kim}
\address{Simion Stoilow Institute of Mathematics of the Romanian Academy, P.O. Box 1-764, 014700 Bucharest, Romania}
\email{Yeongrak.Kim@gmail.com}
\thanks{The author has been partly supported by the CNCS-UEFISCDI Grant PN-II-PCE-2011-3-0288 and BITDEFENDER postdoctoral fellowship.}

\begin{abstract}
We construct an Ulrich bundle on the blowup at a point when the original variety is embedded by a sufficiently positive linear system and carries an Ulrich bundle. In particular, we describe the relation between special Ulrich bundles on the blown-up surfaces and the original surfaces.
\end{abstract}

\maketitle

Let $X \subset \p^N$ be a smooth projective variety of dimension $n$, embedded by a complete linear system $| \o_X(H) |$ for some very ample divisor $H$. An \emph{Ulrich bundle} on $X$ \cite{ESW} is a vector bundle $\sF$ on $X$ whose twists satisfy a set of vanishing conditions on cohomology
\[
H^i (X, \sF(-jH)) = 0 \textrm{ for all } i \textrm{ and } 1\le j \le n. 
\]
Ulrich bundles appeared in commutative algebra in relation with maximally generated maximal Cohen-Macaulay modules \cite{U84}. In algebraic geometry, the notion of Ulrich bundles suprisingly appeared thanks to recent works by Beauville and Eisenbud-Schreyer. The importance is motivated by the relations between the Cayley-Chow forms \cite{B00,ESW} and with the cohomology tables \cite{ES11}.

Eisenbud and Schreyer made a conjecture that every projective variety admits an Ulrich bundle \cite{ESW}, which is wildly open even for smooth surfaces. The answer is known for a few cases including: curves \cite{ESW}, complete intersections \cite{HUB}, Grassmannians \cite{CM}, del Pezzo surfaces \cite{ESW} and more rational surfaces with an anticanonical pencil \cite{Kim16}, general K3 surfaces \cite{AFO}, abelian surfaces \cite{B15}, Fano polarized Enriques surfaces \cite{BN}, and surfaces with $q=p_g=0$ embedded by a sufficiently large linear system \cite{B16}. 

In classical algebraic geometry, there are 2 fundamental operations, namely, the hyperplane cut and the linear projection. It is well known that the restriction of an Ulrich bundle to a general hyperplane section is also an Ulrich bundle (cf. \cite{CH}). It is also straightforward that the vanishing conditions do not affect on taking a linear projection from a point $P \in \p^N \setminus X$. Hence, the only interesting case occurs from the ``projection'' from a point inside of $X$ which can be realized as the blowup at a point.

We briefly review the relation between inner projections and blowups. Let $P \in X$ be a point. The linear projection from $P$ gives a rational map $\pi_P : X \dashrightarrow \p^{N-1}$ defined on $X \setminus \{ P \}$. We can eliminate the point of indeterminacy by taking the blow-up $\sigma : \tilde{X} \to X$ at $P$. The complete linear system $| \sigma^{*} \o_{X}(H) \otimes \o_{\tilde{X}}(-E) |$ induces a morphism from $\tilde{X}$ to $\p^{N-1}$ whose image is the closure of ${\pi_P(X \setminus \{ P \})}$, where $E = \sigma^{-1}(P)$ is the exceptional divisor. 

In this short note, we construct an Ulrich bundle on the blowup at a point from an Ulrich bundle on the original variety. 

\begin{thm}\label{Theorem:Main}
Assume furthermore that the divisor $\tilde{H} := \sigma^* H - E$ is very ample. Suppose we have an Ulrich bundle $\sF$ on $X$ with respect to the polarization $\o_X(H)$. Then the vector bundle 
\[
\tilde{\sF} := \sigma^* \sF \otimes \o_{\tilde{X}} (-E)
\]
is an Ulrich vector bundle on $\tilde{X}$ with respect to $\o_{\tilde{X}} (\tilde{H})$.
\end{thm}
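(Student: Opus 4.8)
The plan is to reduce the required vanishings on $\tilde X$ to the Ulrich vanishings on $X$, plus line-bundle cohomology on the exceptional $\p^{n-1}$, using that $\sigma$ contracts $E$ to a point and satisfies $R\sigma_*\o_{\tilde X}=\o_X$.

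First I would rewrite the relevant twist. Since $\tilde H=\sigma^*H-E$, a direct computation gives
\[
\tilde{\sF}(-j\tilde H)\;=\;\sigma^*\sF\otimes\o_{\tilde X}(-E)\otimes\o_{\tilde X}(-j\sigma^*H+jE)\;=\;\sigma^*\bigl(\sF(-jH)\bigr)\otimes\o_{\tilde X}\bigl((j-1)E\bigr),
\]
so the theorem amounts to showing $H^i\bigl(\tilde X,\sigma^*(\sF(-jH))((j-1)E)\bigr)=0$ for all $i$ and all $1\le j\le n$. Because $\sigma$ is the blowup of a smooth point on a smooth variety, $R^q\sigma_*\o_{\tilde X}=0$ for $q>0$; combined with the projection formula and the Leray spectral sequence, this yields $H^i(\tilde X,\sigma^*\sG)=H^i(X,\sG)$ for every locally free $\sG$ on $X$. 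In particular the case $j=1$, where $(j-1)E=0$, is immediate from the Ulrich property of $\sF$ on $X$.

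Next I would handle the twists $2\le j\le n$ by peeling off copies of $E$ one at a time. Twisting the structure sequence $0\to\o_{\tilde X}(-E)\to\o_{\tilde X}\to\o_E\to 0$ by $\sigma^*(\sF(-jH))((k+1)E)$, and using that $E\cong\p^{n-1}$ with $\o_E(E)\cong\o_{\p^{n-1}}(-1)$, and that $\sigma^*\sF|_E$ is trivial since it is pulled back from the point $P$, one obtains short exact sequences
\[
0\to\sigma^*(\sF(-jH))(kE)\to\sigma^*(\sF(-jH))((k+1)E)\to\o_{\p^{n-1}}(-(k+1))^{\oplus r}\to 0,
\]
with $r=\rank\sF$. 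For $0\le k\le n-2$ the quotient has vanishing cohomology in every degree, since $H^i(\p^{n-1},\o_{\p^{n-1}}(-m))=0$ for all $i$ whenever $1\le m\le n-1$; hence $H^i$ of the middle term agrees with $H^i$ of the left term. Starting from $k=0$, where the group is $H^i(X,\sF(-jH))=0$ by the first step, an induction on $k$ gives the vanishing for all $0\le k\le n-1$, and specializing to $k=j-1$ — which lies in this range exactly because $1\le j\le n$ — completes the argument.

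I do not expect a genuine obstacle here: once the twist is rewritten the computation is essentially forced. The one point needing care, and the place where the positivity hypothesis is morally being spent, is the bookkeeping that makes the range $0\le k\le n-2$ of the $\p^{n-1}$-vanishing dovetail with the Ulrich range $1\le j\le n$, so that every twist $(j-1)E$ is reachable from $k=0$. I would also remark that very-ampleness of $\tilde H$ is not used in the cohomological vanishing itself; it enters only so that $\o_{\tilde X}(\tilde H)$ is an honest (very ample) polarization — equivalently, so that $\tilde X$ is the inner projection of $X$ from $P$ — which is what makes the conclusion ``$\tilde{\sF}$ is an Ulrich bundle with respect to $\o_{\tilde X}(\tilde H)$'' meaningful.
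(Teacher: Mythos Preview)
Your proof is correct. The paper's argument is closely related but organized a little differently: rather than peeling off copies of $E$ on $\tilde X$ via the ideal-sheaf sequence, the paper computes the higher direct images $R^i\sigma_*\o_{\tilde X}((j-1)E)$ directly, using the theorem on formal functions to reduce to cohomology on the thickenings $mE$ and then inductively to $H^i(\p^{n-1},\o_{\p^{n-1}}(1-j))=0$; once those vanish, the projection formula and Leray give $H^i(\tilde X,\tilde\sF(-j\tilde H))\simeq H^i(X,\sF(-jH))=0$ in one stroke for every $1\le j\le n$. Both routes rest on the same $\p^{n-1}$-vanishing, so the difference is largely cosmetic; your inductive peeling avoids formal functions and only needs the familiar fact $R\sigma_*\o_{\tilde X}=\o_X$, while the paper's version isolates $R^i\sigma_*\o_{\tilde X}((j-1)E)=0$ as a clean intermediate lemma.
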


\begin{proof}
We have to show that $\tilde{\sF} (-j \tilde{H}) = \sigma^{*}( \sF(-jH)) \otimes \o_{\tilde{X}} ( (j-1)E)$ has no cohomology for every $1 \le j \le n$. Note that the push-forward $\sigma_{*} \o_{\tilde{X}} (jE) = \o_X$ for every $j \ge 0$. We first claim that the higher direct image $R^i \sigma_{*} \o_{\tilde{X}} ((j-1)E) = 0$ for every $i>0$ and $1 \le j \le n$. It is enough to show that the stalk vanishes at every point $Q \in X$, which can be computed from the inverse limit
\[
\left( R^i \sigma_{*} \o_{\tilde{X}} ((j-1)E) \right)^{\wedge}_{Q} = 
\left\{\begin{array}{lr}
0 & Q \neq P, \\
\varprojlim H^i (mE, \o_{mE} ((j-1)E)) & Q = P.  \\
\end{array}\right.
\]
By the short exact sequence
\[
0 \to \o_E (-(m-1)E) \simeq \o_{\p^{n-1}} (m-1) \to \o_{mE} \to \o_{(m-1)E} \to 0,
\]
we have
\begin{eqnarray*}
H^i (mE, \o_{mE}((j-1)E)) & \simeq & H^i ((m-1)E, \o_{(m-1)E}((j-1)E)) \\
& \vdots & \\
& \simeq & H^i (E, \o_E ((j-1)E)) \\
& = & H^i (\p^{n-1} , \o_{\p^{n-1}} (1-j)) = 0
\end{eqnarray*}
for any $i>0, m \ge 1$ and $1 \le j \le n$. 
Applying the projection formula, we have
\begin{eqnarray*}
R^i \sigma_{*} ( \tilde{\sF} (-j \tilde{H})) & = & \sF(-jH) \otimes R^i \sigma_{*} \o_{\tilde{X}} ((j-1)E) \\
& = & 0
\end{eqnarray*}
for every $i>0$ and $1 \le j \le n$. Hence, Leray spectral sequence implies that the cohomology group
\begin{eqnarray*}
H^i (\tilde{X}, \tilde{\sF} (-j \tilde{H})) & \simeq & H^i (X, \sigma_{*} (\tilde{\sF}(-j \tilde{H}))) \\
& \simeq & H^i (X, \sF(-jH) \otimes \sigma_{*} \o_{\tilde{X}} ((j-1)E)) \\
& = & H^i (X, \sF(-jH)) \\
& = & 0
\end{eqnarray*}
vanishes for every $i$ and $1 \le j \le n$, since $\sF$ is Ulrich on $(X,H)$. Therefore, we conclude that $\tilde{\sF}$ is an Ulrich vector bundle on $( \tilde{X}, \tilde{H})$. 
\end{proof}

Particularly interesting case happens when $X$ is a smooth surface. The above construction provides an Ulrich bundle on blown-up surfaces at a few points, by taking consecutive inner projections. Moreover, the procedure also provides a direct application on ``special Ulrich bundles''. Eisenbud and Schreyer introduced the notion of \emph{special Ulrich bundles} on a surface $X$ \cite{ESW}, which are Ulrich bundles $\sF$ of rank 2 such that $\det \sF = \o_X(K_X + 3H)$, where $K_X$ denotes the canonical divisor of $X$. The existence of special Ulrich bundles yields a very nice presentation of the Cayley-Chow form of $X$, indeed, $X$ admits a Pfaffian B{\'e}zout form in Pl{\"u}cker coordinates \cite{ESW}. As an immediate consequence, the procedure with a special Ulrich bundle gives rise to a special Ulrich bundle on the upstairs:

\begin{cor}
Let $(X,H)$ be a smooth polarized surface satisfies the assumptions in \emph{Theorem \ref{Theorem:Main}}. If $\sF$ is a special Ulrich bundle on $X$, then $\tilde{\sF}$ is also a special Ulrich bundle on $\tilde{X}$.
\end{cor}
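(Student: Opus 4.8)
The plan is short, because the substance is already done: Theorem \ref{Theorem:Main} shows that $\tilde{\sF} = \sigma^* \sF \otimes \o_{\tilde{X}}(-E)$ is an Ulrich bundle on $(\tilde{X}, \tilde{H})$, so the only thing left to check is that $\tilde{\sF}$ meets the two numerical requirements in the definition of a \emph{special} Ulrich bundle on a surface, namely $\rank \tilde{\sF} = 2$ and $\det \tilde{\sF} = \o_{\tilde{X}}(K_{\tilde{X}} + 3\tilde{H})$. The rank condition is immediate: pulling back along $\sigma$ and tensoring with a line bundle do not change the rank, so $\rank \tilde{\sF} = \rank \sigma^* \sF = \rank \sF = 2$.

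For the determinant I would simply compute in the Picard group of $\tilde{X}$. Since $\sF$ has rank $2$, tensoring with the line bundle $\o_{\tilde{X}}(-E)$ multiplies the determinant by $\o_{\tilde{X}}(-E)^{\otimes 2}$, and $\det$ commutes with $\sigma^*$, so
\[
\det \tilde{\sF} = \sigma^*(\det \sF) \otimes \o_{\tilde{X}}(-2E) = \o_{\tilde{X}}\bigl(\sigma^*(K_X + 3H) - 2E\bigr),
\]
using $\det \sF = \o_X(K_X + 3H)$. On the other hand, the blowup of a smooth surface at a point satisfies $K_{\tilde{X}} = \sigma^* K_X + E$, and by definition $\tilde{H} = \sigma^* H - E$, so
\[
K_{\tilde{X}} + 3\tilde{H} = \sigma^* K_X + E + 3\sigma^* H - 3E = \sigma^*(K_X + 3H) - 2E.
\]
Comparing the two displayed expressions yields $\det \tilde{\sF} = \o_{\tilde{X}}(K_{\tilde{X}} + 3\tilde{H})$, which completes the verification.

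I do not expect a genuine obstacle: all of the cohomological input is carried by Theorem \ref{Theorem:Main}, and what remains is a bookkeeping identity for divisor classes under a one-point blowup. The only point worth a moment's attention is the interplay between the rank and the twist: the exponent $-2E$ above appears precisely because $\rank \sF = 2$, and it is exactly this $2$ that cancels against the discrepancy $E = K_{\tilde{X}} - \sigma^*K_X$ plus the $-3E$ coming from $3\tilde{H}$. Thus the compatibility is special to rank-$2$ bundles on surfaces under a single inner projection, and the argument would have to be reconsidered if one wanted an analogous statement for other ranks or dimensions.
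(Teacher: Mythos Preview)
Your proof is correct and follows exactly the paper's approach: the paper's own proof is the one-line determinant computation $\det \tilde{\sF} = \sigma^{*}( \o_X(K_X + 3H)) \otimes \o_{\tilde{X}}(-2E) = \o_{\tilde{X}}(K_{\tilde{X}} + 3 \tilde{H})$, and you have simply spelled out the intermediate steps (the rank check, the identity $K_{\tilde{X}} = \sigma^* K_X + E$, and the role of the rank-$2$ hypothesis) in more detail.
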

\begin{proof}
It comes from a direct computation 
\[ 
\det \tilde{\sF} = \sigma^{*}( \o_X(K_X + 3H)) \otimes \o_{\tilde{X}}(-2E) = \o_{\tilde{X}}(K_{\tilde{X}} + 3 \tilde{H}).
\]
\end{proof}

It is also possible to construct a special Ulrich bundle in the converse direction, which reveals the connection between special Ulrich bundles on the upstairs and downstairs:

\begin{thm}
Let $(X,H)$ be a smooth polarized surface satisfies the assumptions in \emph{Thoerem \ref{Theorem:Main}} as above. Let $\tilde{\sF}$ be a special Ulrich bundle on $(\tilde{X}, \tilde{H})$. Then $\sigma_{*} (\tilde{\sF}(E))$ is a special Ulrich bundle on $(X,H)$.
\end{thm}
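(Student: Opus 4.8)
The plan is to show that a special Ulrich bundle $\tilde\sF$ on $(\tilde X,\tilde H)$ is automatically of the form $\sigma^*\sG\otimes\o_{\tilde X}(-E)$ — exactly the shape of the bundle produced by Theorem~\ref{Theorem:Main} — and then to run the argument of that theorem in reverse, together with a determinant computation. Throughout I use that $\dim X=2$, so $E\cong\p^1$ with $E^2=-1$, $K_{\tilde X}\cdot E=-1$ and $\tilde H\cdot E=1$, and I set $\sG:=\sigma_*(\tilde\sF(E))$, a torsion-free rank-$2$ sheaf on $X$ that agrees with $\tilde\sF$ away from $P$.

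\emph{Step 1: the restriction $\tilde\sF\resto{E}$ is balanced, i.e.\ $\tilde\sF\resto{E}\cong\o_{\p^1}(1)^{\oplus2}$.} Write $\tilde\sF\resto{E}\cong\o_{\p^1}(a)\oplus\o_{\p^1}(b)$ with $a\le b$. From $\det\tilde\sF=\o_{\tilde X}(K_{\tilde X}+3\tilde H)$ and $(K_{\tilde X}+3\tilde H)\cdot E=-1+3=2$ we get $a+b=2$. An Ulrich bundle satisfies $H^i(\tilde\sF(-i\tilde H))=0$ for all $i>0$, hence is $0$-regular and therefore globally generated; restricting the evaluation map to $E$ shows $\tilde\sF\resto{E}$ is globally generated, so $a\ge0$. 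Thus $(a,b)$ is either $(1,1)$ or $(0,2)$, and the real content is to exclude the unbalanced case $(0,2)$; I expect this to be the main obstacle. My plan is to feed the Ulrich vanishing $H^*(\tilde X,\tilde\sF(-2\tilde H))=0$ (and, if necessary, the ACM vanishing $H^1(\tilde X,\tilde\sF(t\tilde H))=0$ for all $t$ and the $\mu_{\tilde H}$-semistability of Ulrich bundles) into the sequence $0\to\tilde\sF(-2\tilde H-E)\to\tilde\sF(-2\tilde H)\to\tilde\sF\resto{E}\otimes\o_{\p^1}(-2)\to0$: in the case $(0,2)$ one reads off $H^1(\tilde X,\tilde\sF(-2\tilde H-E))\neq0$ and $H^2(\tilde X,\tilde\sF(-2\tilde H-E))\neq0$, and since $-2\tilde H-E=-2\sigma^*H+E$ the projection-formula/Leray identity of Step~3 below turns this into $H^1(X,\sG(-2H))\neq0$. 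One must therefore show this cannot happen, equivalently that $\sG$ is forced to be a genuine Ulrich bundle. Note that in the unbalanced case $\sG$ is \emph{not} locally free: the stalk $\sG_P$ has associated graded $\bigoplus_{m\ge0}H^0\!\bigl(\o_{\p^1}(a-1+m)\oplus\o_{\p^1}(b-1+m)\bigr)$, which for $(a,b)=(0,2)$ is not a free $\gr\o_{X,P}$-module. So it would suffice to prove independently that $\sigma_*(\tilde\sF(E))$ is arithmetically Cohen--Macaulay, hence locally free on the smooth surface $X$; carrying this out cleanly is the part I anticipate as genuinely delicate.

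\emph{Step 2: descent.} Granting $\tilde\sF\resto{E}\cong\o_{\p^1}(1)^{\oplus2}$, we get $\bigl(\tilde\sF(E)\bigr)\resto{E}=\tilde\sF\resto{E}\otimes\o_E(-E)\cong\o_{\p^1}(1)^{\oplus2}\otimes\o_{\p^1}(-1)=\o_E^{\oplus2}$, so $\tilde\sF(E)$ is trivial along $E$. A vector bundle on the blow-up of a smooth point of a smooth surface which is trivial on $E$ is pulled back from $X$ (the descended bundle being its $\sigma_*$; the stalk of $\sigma_*(\tilde\sF(E))$ at $P$ is now free, by the graded-piece computation above with all pieces $\o_{\p^1}(m)^{\oplus2}$). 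Hence $\tilde\sF(E)=\sigma^*\sG$ with $\sG=\sigma_*(\tilde\sF(E))$ a rank-$2$ vector bundle on $X$, i.e.\ $\tilde\sF=\sigma^*\sG\otimes\o_{\tilde X}(-E)$.

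\emph{Step 3: $\sG$ is special Ulrich.} Now $\tilde\sF$ has exactly the form appearing in Theorem~\ref{Theorem:Main}, and the chain of isomorphisms in that proof used only this form, not that the downstairs bundle was Ulrich: $R^i\sigma_*\bigl(\tilde\sF(-j\tilde H)\bigr)=\sG(-jH)\otimes R^i\sigma_*\o_{\tilde X}((j-1)E)$, which vanishes for $i>0$ and equals $\sG(-jH)$ for $i=0$ when $1\le j\le 2$, so by the Leray spectral sequence $H^i(X,\sG(-jH))\cong H^i(\tilde X,\tilde\sF(-j\tilde H))=0$ for all $i$ and $1\le j\le 2$; thus $\sG$ is an Ulrich bundle on $(X,H)$. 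Finally $\det\tilde\sF=\sigma^*(\det\sG)\otimes\o_{\tilde X}(-2E)$, while $\det\tilde\sF=\o_{\tilde X}(K_{\tilde X}+3\tilde H)=\o_{\tilde X}\bigl(\sigma^*(K_X+3H)-2E\bigr)$; since $\sigma^*$ is injective on Picard groups, $\det\sG=\o_X(K_X+3H)$. Therefore $\sigma_*(\tilde\sF(E))=\sG$ is a special Ulrich bundle on $(X,H)$. The only non-formal ingredient is Step~1, and within it the exclusion of the unbalanced splitting type along $E$.
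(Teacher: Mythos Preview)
Your Step~1 is, as you yourself flag, incomplete: you do not exclude the splitting type $(0,2)$, and the route you sketch---showing that $\sG=\sigma_*(\tilde\sF(E))$ is ACM so as to force local freeness and hence the balanced type---is circular, since the ACM/Ulrich property of $\sG$ is exactly what you are after. Steps~2 and~3 are fine once Step~1 is granted, and your determinant argument via injectivity of $\sigma^*$ on Picard groups is in fact tidier than the paper's.

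The paper sidesteps your obstacle by \emph{not} trying to realise $\tilde\sF(E)$ as a pullback. It argues that in \emph{both} admissible splitting types one has $h^0(E,\tilde\sF(E)|_E)=2$ and $h^1=0$, so (invoking Grauert) $\sG$ is locally free of rank~$2$ with $R^1\sigma_*(\tilde\sF(E))=0$; note this is in direct tension with your assertion that $\sG$ fails to be locally free in the unbalanced case. The determinant $\det\sG=\o_X(K_X+3H)$ is then computed from a resolution of $\tilde\sF$ built out of a generic $3$-dimensional space of sections (equivalently, it is determined on $X\setminus\{P\}$), and the vanishing $H^\bullet(X,\sG(-H))=0$ follows from the projection formula and Leray, since $\tilde\sF(E)\otimes\sigma^*\o_X(-H)=\tilde\sF(-\tilde H)$. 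The idea you are missing is that this already suffices: for a rank-$2$ bundle with $\det\sG=\o_X(K_X+3H)$, Serre duality gives
\[
H^i(X,\sG(-2H))\cong H^{2-i}(X,\sG^*(K_X+2H))^*=H^{2-i}(X,\sG(-H))^*=0,
\]
so the $j=2$ vanishing---precisely the twist where your unbalanced case would bite, as your own Step~1 computation shows---comes for free once the determinant and the $j=1$ vanishing are in hand. This Serre-duality reduction is the device that lets the paper bypass the exclusion you anticipated as delicate.
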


\begin{proof}
We first claim that $\sigma_{*} (\tilde{\sF}(E))$ is a vector bundle on $X$. Since $c_1(\tilde{\sF}(E)) = K_{\tilde{X}} + 3 \tilde{H} + 2E = \sigma^{*} (K_X + 3H)$, we have $\deg \tilde{\sF}(E)|_E =0$. By Grothendieck's theorem, we have $\tilde{\sF}(E)|_E \simeq \o_{\p^1}(a) \oplus \o_{\p^1}(-a)$ for some $a \ge 0$. Note that $\tilde{\sF}$ is globally generated since it is 0-regular with respect to $\tilde{H}$. Hence the restriction $\tilde{\sF}|_E \simeq \o_{\p^1}(a+1) \oplus \o_{\p^1}(-a+1)$ is also globally generated, so either $a=0$ or $a=1$ holds. For the both cases, we obtain $h^0(E, \tilde{\sF}(E)|_E) = 2$. Therefore $h^0(\sigma^{-1}(Q), \tilde{\sF}(E)|_{\sigma^{-1}(Q)}) = 2$ holds for every $Q \in X$, which implies that $\sigma_{*} (\tilde{\sF}(E))$ is locally free of rank 2 by Grauert's theorem. Also note that a similar computation induces that $R^1 \sigma_{*}(\tilde{\sF}(E)) = 0$ since $\tilde{\sF}(E)|_E$ has vanishing $H^1$. 

To prove $\sigma_{*} (\tilde{\sF}(E))$ is a special Ulrich bundle, it is enough to show that 
$\det \sigma_{*} (\tilde{\sF}(E))$ $ \simeq \o_X (K_X + 3H)$ and partial vanishing conditions $H^{\bullet}(X, \sigma_{*} (\tilde{\sF}(E))\otimes \o_X(-H)) = 0$. Indeed, for those vector bundles, we have
\begin{eqnarray*}
H^i (X, \sigma_{*} (\tilde{\sF}(E))\otimes \o_X(-2H)) & = & H^{2-i} (X, \sigma_{*} (\tilde{\sF}(E))^{*} \otimes \o_X(K_X+2H))^{*} \\
& = & H^{2-i}(X, \sigma_{*}(\tilde{\sF}(E)) \otimes \o_X(-H))^{*} \\
& = & 0
\end{eqnarray*}
by Serre duality. 

Note that the determinant of a coherent sheaf $\sG$ is the alternating product $\bigotimes \det (\mathcal{E}_i)^{(-1)^i}$ where $\mathcal{E}_i$ define
\[
0 \to \mathcal{E}_r \to \cdots \to \mathcal{E}_1 \to \mathcal{E}_0 \to \sG \to 0
\]
a finite locally free resolution of $\sG$. It is well-known that such a locally free resolution always exists on a smooth variety, and the determinant is equal to the structure sheaf when $\sG$ is supported on a subset of codimension at least 2 (cf. \cite{HN}).
Let $V \subset H^0(\tilde{X}, \tilde{\sF})$ be a general subspace of dimension 3. Since $\tilde{\sF}$ is globally generated, the evaluation map $ev : V \otimes \o_{\tilde{X}} \to \tilde{\sF}$ is surjective possibly except for finitely many points. Hence we have an exact sequence
\[
0 \to \o_{\tilde{X}} (-K_{\tilde{X}} - 3 \tilde{H}) \simeq \sigma^{*} \o_X (-K_X - 3H) \otimes \o_{\tilde{X}}(2E) \to V \otimes \o_{\tilde {X}} \stackrel{ev} \longrightarrow \tilde{\sF} \to \mathcal{R}_{Z} \to 0.
\]
Here, $\mathcal{R}_Z = (\ker ev)$ is supported on a finite set of points $Z \subset \tilde{X}$. Since $\mathcal{R}_Z$ and $R^1 \sigma_{*}$ terms have supports of codimension at least 2, they don't affect on the determinant computation. Twisting by $\o_{\tilde{X}}(E)$ and taking push-forward, we conclude that
\begin{eqnarray*}
\det \sigma_{*} (\tilde{\sF}(E)) & = & \det ( V \otimes \sigma_{*} \o_{\tilde{X}}(E)) \otimes (\det \sigma_{*} (\sigma^{*} \o_X(-K_X - 3H) \otimes \o_{\tilde{X}}(3E)))^{*} \\
&= & (\wedge^3 V \otimes \o_X) \otimes \o_X(-K_X - 3H)^{*} \\
& = & \o_X(K_X + 3H)
\end{eqnarray*}
as desired.
 
Apply the projection formula and Leray spectral sequence, we have
\begin{eqnarray*}
H^i(X, \sigma_{*} (\tilde{\sF}(E))\otimes \o_X(-H)) & \simeq & H^i (\tilde{X}, \tilde{\sF} (E) \otimes \sigma^{*} \o_X (-H)) \\
& = & H^i (\tilde{X}, \tilde{\sF}(-\tilde{H})) \\
& = & 0
\end{eqnarray*}
which completes the proof.
\end{proof}

\begin{rem}

When $X$ is a smooth regular surface, Noma found an equivalence condition for the very ampleness of the analogus line bundle in terms of the position of points for the blowup center \cite{N06}. 

\end{rem}

\bibliography{refs-main/refs}
\def\cprime{$'$} \def\cprime{$'$} \def\cprime{$'$} \def\cprime{$'$}
  \def\cprime{$'$} \def\cprime{$'$} \def\dbar{\leavevmode\hbox to
  0pt{\hskip.2ex \accent"16\hss}d} \def\cprime{$'$} \def\cprime{$'$}
  \def\polhk#1{\setbox0=\hbox{#1}{\ooalign{\hidewidth
  \lower1.5ex\hbox{`}\hidewidth\crcr\unhbox0}}} \def\cprime{$'$}
  \def\cprime{$'$} \def\cprime{$'$} \def\cprime{$'$}
  \def\polhk#1{\setbox0=\hbox{#1}{\ooalign{\hidewidth
  \lower1.5ex\hbox{`}\hidewidth\crcr\unhbox0}}} \def\cdprime{$''$}
  \def\cprime{$'$} \def\cprime{$'$} \def\cprime{$'$} \def\cprime{$'$}
\providecommand{\bysame}{\leavevmode\hbox to3em{\hrulefill}\thinspace}
\providecommand{\MR}{\relax\ifhmode\unskip\space\fi MR }
\providecommand{\MRhref}[2]{%
  \href{http://www.ams.org/mathscinet-getitem?mr=#1}{#2}
}
\providecommand{\href}[2]{#2}

\vskip1cm

\end{document}